\def\textsubscript#1%
\def\cdd{\mbox{\boldmath$\cdot$}~}
\def\@oddfoot{\hfill}
\def\setshumei#1#2#3{%
  \shumeicount=\count0
  \def\@oddhead{%
    \raise-5pt\hbox to0pt{\vrule width\hsize height 0pt depth 0.4pt\hss}\relax
    \ifnum \shumeicount=\count0
      \raise-7pt\hbox to0pt{\vrule width\hsize height 0pt depth 0.4pt\hss}\relax
      #1
    \else
      \ifodd\count0
        #2
      \else
        #3
       \fi
     \fi
  }%
}
\def\@oddfoot{\hfill}
\def\setshujiao{%
  \shujiaocount=\count0
  \def\@oddfoot{%
      \ifodd\count0
      \else
      \fi
  }%
}
\def\title#1#2#3#4{{
  \vspace*{0.3cm}
  \begin{flushleft} \Large\bf #1\end{flushleft}
  \vspace*{-0.2cm}
      \begin{flushleft}
      \bf #2
      \end{flushleft}
      \footnotetext{\hspace{-6mm} #3\\ #4}}}
\def\dshm#1#2#3#4
\def\drd#1#2
\def\tilde{\widetilde}
\def\bar{\overline}
\def\epsilon{\varepsilon}
\newcommand*\doTRANS[2]{\raisebox{\depth}{$\m@th#1\intercal$}}
\begin{document}
%
\title{Linear Quadratic Mean Field Games with Quantile-Dependent Cost Coefficients$^*$}
{\uppercase{Gao}
Shuang \cdd \uppercase{Malham\'e}  Roland P.}
{\uppercase{Gao} Shuang   \\
Department of Electrical Engineering, Polytechnique Montr\'eal, Montreal, Canada; Group for Research in Decision Analysis (GERAD), Montreal, Canada.  Email: shuang.gao@polymtl.ca.  \\
\uppercase{Malham\'e} Roland P.  \\
Department of Electrical Engineering, Polytechnique Montr\'eal, Montreal, Canada; Group for Research in Decision Analysis (GERAD), Montreal, Canada.  Email: roland.malhame@polymtl.ca   
   } 
{$^*$This research was supported by NSERC (Canada). \\ 
}
%
%
%
\dshm{20XX}{XX}{Linear Quadratic Mean Field Games with Quantile-Dependent Cost}{\uppercase{Gao Shuang} $\cdd$ \uppercase{Malham\'e  Roland P.} 
}
%
\Abstract{This paper studies a class of linear quadratic mean field games where the coefficients of quadratic cost functions depend on both the mean and the variance of the population's state distribution through its  quantile function. 
Such a formulation allows for modelling agents that are sensitive to not only the population average but also the population variance.
The corresponding mean field game equilibrium is identified, which involves solving two coupled differential equations: one is a Riccati equation and the other the variance evolution equation. Furthermore, the conditions for the existence and uniqueness of the mean field equilibrium are established.  Finally, numerical results are presented to illustrate the behavior of two coupled differential equations and the performance of the mean field game solution.}      

\Keywords{Mean field games, quantile,  two-point boundary value problem, Riccati equation}        



\section{Introduction}

  Mean Field Game (MFG) theory addresses the complexity of modelling and analyzing large populations of strategically interacting dynamic agents  \cite{HMC06, lasry2006jeux1} by providing an  framework to identify decentralized approximate solutions for large-population stochastic games. 
  MFG finds applications in various domains such as finance (e.g., algorithmic trading \cite{casgrain2020mean}), energy systems \cite{kizilkale2019integral}, transportation dynamics \cite{huang2021dynamic}, and epidemic modelling \cite{aurell2022optimal}, among others. 
 Many variations and extensions of MFG are now available, see for instance \cite{subramanian2019reinforcement, lauriere2022scalable, tembine2013risk, bauso2016robust, kizilkale2012mean} where techniques from adaptive control, reinforcement learning, risk-sensitive control, and robust control have been adapted to MFG problems.
One  variation of MFG problems incorporates the quantile of the mean field distribution (see \cite{tembine2017quantile,tchuendom2019quantilized,RinelQuantile2024Automatica}). The quantile function allows a simple representation of  the mean-field distribution involving  higher order moments (e.g., variances in the Gaussian case) and the flexibility of adjusting the distribution couplings via the quantile parameter \cite{tchuendom2019quantilized,RinelQuantile2024Automatica}.

  In this work, we will investigate linear quadratic MFGs where the cost coefficients depend on the quantile of the mean field distribution.  Due to the Gaussian nature of the problem, one can simplify the analysis, resulting in a linear quadratic MFG where cost coefficients depends on the mean and variance of the mean field distribution. This allows us to characterized the MFG solutions and establish conditions for the existence and uniqueness of solutions.
 
  One closely related work is the integral control based MFG \cite{kizilkale2019integral} where the coefficient in the running cost depends on an integral term related to the population behaviour over time to account for requirements from energy applications. The dependence of the cost coefficient on the population in \cite{kizilkale2019integral} differs from the current paper. Another related work is  \cite{fischer2016continuous} where the mean-variance portfolio optimization problems treated in using a mean field approach,  and hence is different from the current paper in the problem formulation.

The paper is organized as follows: Section \ref{sec:problem} presents the problem formulation. Then  the MFG solution is established in Section \ref{sec:MFG-sol}. A special case where the cost coefficient does not depend on the mean is treated in Section \ref{sec:special-case}. 
 Conditions for the existence and uniqueness of solutions are established in Section \ref{sec:exist-unique}. The properties of the solutions are numerically illustrated in Section \ref{sec:numerics}. Finally, future generalizations are discussed in Section \ref{sec:conclusion}.

\section{Problem Statement} \label{sec:problem}

A quantile of a random variable \(X\) (or its distribution $F_X$) provides a {least upper bound}  \(x\) such that a certain {fraction} \(\alpha\) (with \(0 \leq \alpha \leq 1\)) of the probability mass {is guaranteed to lie below or at} \(x\). 
{More precisely,} the \emph{\(\alpha\)-quantile} of a random variable $X$ (resp. its distribution $F_X$) is denoted by \(\mathcal{Q}_X(\alpha)\) (resp. $\mathcal{Q}_{F_X}(\alpha)$) and is defined by
\begin{equation}\label{eq:quantile}
	\mathcal{Q}_X(\alpha) =\mathcal{Q}_{F_X}(\alpha)  := \inf \{ x \in \mathbb{R} : F_X(x) \geq \alpha \}, \quad \alpha \in [0,1],
\end{equation}
where \(F_X(x)\) denotes the cumulative distribution function (CDF) of \(X\). %
For the special case of a Gaussian random variable \( X \sim \mathcal{N}(\mu, \sigma)\) with the mean $\mu$ and the standard deviation $\sigma$, the quantile function satisfies
$
\mathcal{Q}_X(\alpha) = \mu + \sigma \Phi^{-1}_{\alpha}
$
where \( \Phi^{-1}_{(\cdot)} \) is the probit function, i.e., the inverse CDF of the standard (scalar-valued) normal distribution (see Figure~\ref{fig:probit}).
\begin{figure} 
\centering
	\includegraphics[width=5cm]{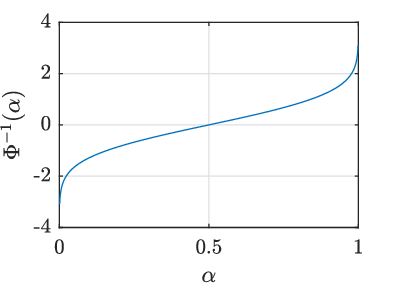}
	\caption{The probit function for the standard (scalar-valued) Gaussian distribution} \label{fig:probit}
\end{figure}

Consider the following linear quadratic stochastic game with $n$ agents: the dynamics of agent $i$ are given by
\begin{equation}\label{eq:ind-dyn}
	dx_i(t) = (ax_i(t)+bu_i(t))dt +\sigma dw_i(t), \quad x_i(t), u_i(t), w_i(t) \in \mathbb{R}, \quad  t \in [0,T],
\end{equation}
and the cost of agent $i$ is given by
\begin{equation}\label{eq:ind-cost}
	J_i(u_i, u_{-i}) = \mathbb{E} \int_0^T \left( \|x_i(t)-\bar{x}^n(t)\|_{{q}_t^{\alpha}(i,n) }^2 + \|u_i(t)\|_r^2 \right)dt
\end{equation}
where  $a\in \mathbb{R}$, $b\neq 0$, $r>0$,  $x_i(t)$ (resp. $u_i(t)$) is the state (resp. the control action) of agent $i$, $u_i$ denotes the strategy of agent $i$, $u_{-i}$ denotes the strategies of all other agents except agent~$i$, $w_i(t)$ is the standard Brownian motion, the initial $x_i(0)\sim \mathcal{N}(\mu_0, V_0)$ is normally distributed with variance $V_0\geq 0$, and the average of the population  state is denoted by $\bar{x}^n(t) := \frac{1}{n} \sum_{i=1}^n x_i(t)$. Let   the empirical state distribution for all agents except agent $i$ be denoted by $\mathcal{D}_{t}^{n,-i}(z) : = \frac{1}{n-1} \sum_{j=1, j\neq i}^n \mathbf{1}_{(x_i(t)\leq z)}$  %
 with $\mathbf{1}_{(\cdot)}$ as the indicator function. The penalty coefficient ${q}_t^{\alpha}(i,n) $ in the running cost  is given by
\begin{equation}
	{q}_t^{\alpha}(i,n) : = q\left(1+e^{\mathcal{Q}_{\mathcal{D}_{t}^{n,-i} }(\alpha)}\right), \quad ~\text{with}~ \alpha  \in (0,1),~ q \geq 0,
\end{equation}
 where the $\alpha$-quantile of the empirical distribution following the definition in \eqref{eq:quantile}  is given by
\[
\mathcal{Q}_{\mathcal{D}_{t}^{n,-i} }(\alpha) = \inf \{ z \in \mathbb{R} : \mathcal{D}_t^{n,-i} (z) \geq \alpha \}.
\]
 For a higher quantile (with a larger quantile parameter $\alpha $), the penalty for deviating from the population average $\bar{x}^n$ is larger and hence agents are encouraged to follow more closely the population average $\bar{x}^n$.
 
{To identify approximate decentralized solutions for the problem above when the population size is large, we apply the mean field game approach \cite{HCM07} by taking the population limit and obtain approximate solutions via solving the limiting game problem.}
In the mean field limit, we obtain the corresponding LQG mean field tracking problem with the dynamics 
\begin{equation}\label{eq:dyn}
	dx_i(t) = (ax_i(t)+bu_i(t))dt +\sigma dw_i(t), \quad x_i(t), u_i(t), w_i(t) \in \mathbb{R}, \quad  t \in [0,T],
\end{equation}
and the cost 
\begin{equation}\label{eq:cost}
	J_i(u_i, \bar{x}) = \mathbb{E} \int_0^T \left( \|x_i(t)-\bar{x}(t)\|_{{q}_t^{\alpha}}^2 + \|u_i(t)\|_r^2 \right)dt
\end{equation}
where the mean state (if exists) is given by $$\bar{x}(t) = \lim_{n\to \infty} \frac{1}{n} \sum_{i=1}^n x_i(t)$$ and the penalty coefficient in the running cost is given by
\begin{equation}
	{q}_t^{\alpha} : = q (1+e^{\mathcal{Q}_{\mathcal{D}_t^\infty}(\alpha)}), \quad ~\text{with}~ \alpha 
	\in (0,1), ~ ~ q \geq 0
\end{equation}
with $\mathcal{D}_t^\infty$ denoting the distribution of the infinite population states at time $t$ and $q_t^\alpha$ (with an abuse of notation) representing the cost coefficient in the population limit. We note that $\mathcal{D}_t^\infty$ and $q_t^\alpha$  do not depend on the index $i$ as individual agent state has a negligible impact on the limit distribution within the infinite population.
 With a higher quantile via a larger quantile parameter $\alpha $,  agents are encouraged to follow more closely the mean state $\bar{x}$.

\section{Mean Field Game Solution} \label{sec:MFG-sol}

We follow the standard idea in \cite{HCM07} to derive the mean field game solutions. First, assuming $(\bar{x}(t))_{t\in [0,T]}$ is given, the optimal tracking solution via Dynamic Programming is given by  
\begin{equation}\label{eq:feeback-form}
	\begin{aligned}
		 u_i^*(t) &= - \frac{b}{r}(\Pi_t x_i(t)+ s_t)
		 	\end{aligned}
\end{equation}
where 
\begin{equation}
	\begin{aligned}
		-\dot{\Pi}_t &= 2 a \Pi_t     -  \frac{b^2}{r} \Pi_t^2 +  q_t^\alpha, \quad \Pi_t\geq 0, \quad   \Pi_T =   0,\\
	-\dot{s}_t &= \big(a  -  \frac{b^2}{r} \Pi_t \big) {s}_t - q_t^\alpha\bar{x}(t), \qquad s_T = 0.
	\end{aligned}
\end{equation}
The control law above corresponds to the best response for the representative agent $i$ when the mean trajectory $(\bar{x}(t))_{t\in [0,T]}$ is given.  
Then the closed-loop dynamics for a representative agent~$i$ that implements the best response are given by 
\[
dx_i (t) = (a - \frac{b^2}{r}\Pi_t) x_i(t) dt -  \frac{b^2}{r} s_t dt +\sigma dw_i(t)
\]
and if all the agents follows the best response, the evolution of the  mean field, if exists, satisfies
\[
d{\bar{x}}(t) = \left((a - \frac{b^2}{r}\Pi_t) \bar{x}(t)-  \frac{b^2}{r} s_t \right)dt
\]
with  $ \bar{x}(t)= \mathbb{E}{x}_i(t)$.

Let $e_i(t) = x_i(t) -\bar{x}(t)$ and then the evolution $e_i(t)$ satisfies
\[
de_i(t) = (a - \frac{b^2}{r}\Pi_t) e_i(t)dt +\sigma dw_i(t).
\]
Applying It\^o's lemma to $ e_i^2$ yields the following
\[
\begin{aligned}
	de_i^2  & = 2e_i de_i  + \sigma^2 dt \\
	& = 2(a - \frac{b^2}{r}\Pi_t) e_i^2dt + 2\sigma e_i dw_i  + \sigma^2 dt.
\end{aligned}
\]
Let \( V(t) := \mathbb{E}[e_i^2 ] \) denote the variance of the state \( x_i(t) \) of a representative agent $i$.
Then taking the expectation on both sides (of the corresponding integral equation)  yields the dynamics for the variance \( V(t) \) as follows: %
\[
\begin{aligned}
	\dot{V}(t) = ~ & 2(a - \frac{b^2}{r}\Pi_t) V(t)  + \sigma^2
\end{aligned}
\]
with the initial condition (given by the variance of the initial condition of the generic agent) $V(0)=V_0:=\mathbb{E}(x_i(0)-\mathbb{E}{x_i(0)})^2$. See e.g., \cite{aastrom2012introduction} for  the covariance evolution for linear stochastic differential equations. 

We note that the control follows a linear state-feedback form as specified in \eqref{eq:feeback-form}, and this implies that the state will be Gaussian distributed and hence the limit population state distribution $\mathcal{D}_t^\infty$ is in fact $ \mathcal{N}(\bar{x}(t), \sqrt{V(t)})$.  Thus the quantile can  {be expressed in terms of} the mean and the variance, resulting in 
\begin{equation}\label{eq:constraint}
	{q}_t^{\alpha} : = q (1+e^{\mathcal{Q}_{\mathcal{D}_t^\infty}(\alpha)}) =  q (1+e^{\bar{x}(t) + \sqrt{V(t)} \Phi^{-1}_{\alpha}}), ~\text{with}~ \alpha \in(0,1).
\end{equation}
 Therefore we obtain the following result.
\begin{lemma}[MFG Solutions]\label{lem:mfg-sol}
	The solution to the mean field game problem defined by \eqref{eq:dyn}, \eqref{eq:cost} and \eqref{eq:constraint}, if exists, is given by 
\begin{equation}
	\begin{aligned}
		 u_i(t) &= - ~\frac{b}{r}(\Pi_t x_i(t)+ s_t)
		 	\end{aligned}
\end{equation}
where 
\begin{align}
		-\dot{\Pi}_t &= 2 a \Pi_t     -  \frac{b^2}{r}\Pi_t^2  +  q_t^\alpha, \quad \Pi_t\geq 0,  \quad  \Pi_T =   0 \label{eq:dyn-Pi}\\
	-\dot{s}_t &= \big( a  -  \frac{b^2}{r} \Pi_t \big) {s}_t - q_t^\alpha\bar{x}(t), \quad s_T = 0\label{eq:dyn-st}\\
	\dot{\bar{x}}(t) & = (a - \frac{b^2}{r}\Pi_t) \bar{x}(t)-  \frac{b^2}{r} s_t, \quad \bar{x}(0)= \mu_0 \label{eq:dyn-barx}\\
	\dot{V}(t)   &= 2(a - \frac{b^2}{r}\Pi_t) V(t)+ \sigma^2, \quad V(0)= V_0 \label{eq:dyn-V}
\end{align}
with the following constraint 
\begin{equation}
	{q}_t^{\alpha}  = q (1+e^{\bar{x}(t) + \sqrt{V(t)} \Phi^{-1}_{\alpha}}), ~\alpha \in(0,1), ~ q\geq 0.
\end{equation}
\end{lemma}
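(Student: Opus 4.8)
The plan is to follow the Nash certainty equivalence (fixed-point) scheme of \cite{HCM07}: freeze the mean trajectory $(\bar x(t))_{t\in[0,T]}$ — and hence, through the constraint \eqref{eq:constraint}, the running-cost coefficient $q_t^\alpha$ — then solve the resulting standard linear--quadratic tracking problem \eqref{eq:dyn}--\eqref{eq:cost} for the representative agent, and finally impose the consistency requirement that the mean trajectory and variance generated when all agents play the resulting best response coincide with the frozen $\bar x(\cdot)$ and $V(\cdot)$. This closes the coupled system \eqref{eq:dyn-Pi}--\eqref{eq:dyn-V}.

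For the tracking step I would postulate the quadratic value function $v(t,x) = \Pi_t x^2 + 2 s_t x + c_t$ and substitute it into the Hamilton--Jacobi--Bellman equation
\[
-\partial_t v = \inf_{u}\{ (ax+bu)\,\partial_x v + \tfrac{1}{2}\sigma^2\, \partial_{xx} v + q_t^\alpha(x-\bar x(t))^2 + r u^2 \},
\]
with terminal condition $v(T,\cdot)=0$. The pointwise minimization gives the feedback $u_i(t) = -\frac{b}{r}(\Pi_t x_i(t)+s_t)$, and matching the coefficients of $x^2$, $x$ and $1$ yields the Riccati equation \eqref{eq:dyn-Pi}, the linear backward ODE \eqref{eq:dyn-st} for $s_t$, and an ODE for $c_t$ that plays no further role. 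Nonnegativity $\Pi_t\ge 0$ and global solvability on $[0,T]$ follow from the comparison principle for scalar Riccati equations (terminal value $0$, nonnegative inhomogeneous term $q_t^\alpha$); with $\Pi$ bounded, $s_t$ is then well defined by variation of constants. A standard verification argument — applying It\^o's formula to $v(t,x_i(t))$ along an arbitrary admissible control, discarding the martingale term, and using that the quadratic running cost dominates the remaining cross terms — confirms that $u_i$ is optimal and that $v(0,\cdot)$ is the optimal cost.

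For the consistency step I would substitute $u_i$ into \eqref{eq:dyn} to obtain the closed-loop dynamics $dx_i = ((a-\frac{b^2}{r}\Pi_t)x_i - \frac{b^2}{r}s_t)\,dt + \sigma\, dw_i$, take expectations to get \eqref{eq:dyn-barx} with $\bar x(0)=\mu_0$, and — setting $e_i(t)=x_i(t)-\bar x(t)$ — apply It\^o's lemma to $e_i^2$ exactly as in the computation preceding the lemma, then take expectations, to get \eqref{eq:dyn-V} with $V(0)=V_0$. Finally, since the closed-loop drift is affine and the initial law is Gaussian, the law of $x_i(t)$ remains Gaussian, so $\mathcal D_t^\infty = \mathcal N(\bar x(t),\sqrt{V(t)})$; inserting the Gaussian quantile identity $\mathcal Q_{\mathcal N(\mu,\sigma)}(\alpha) = \mu + \sigma\,\Phi^{-1}_{\alpha}$ into the definition of $q_t^\alpha$ produces the stated constraint, which is consistent with the $q_t^\alpha$ frozen at the outset.

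The step I expect to be delicate is the circularity inherent in the scheme: $q_t^\alpha$ enters \eqref{eq:dyn-Pi}--\eqref{eq:dyn-st}, whose solutions determine $\bar x$ and $V$ via \eqref{eq:dyn-barx}--\eqref{eq:dyn-V}, which in turn feed back into $q_t^\alpha$. Since the lemma asserts the equilibrium form only conditionally (\emph{if exists}), I would not resolve this fixed point here — that is the role of Section~\ref{sec:exist-unique} — but only record that any equilibrium necessarily satisfies the closed system, while the verification theorem shows conversely that a solution of the coupled ODEs yields an equilibrium. A minor technical point is to fix the admissible control class (processes adapted to agent $i$'s filtration with $\mathbb E\int_0^T u_i^2\,dt<\infty$) so that the stochastic integrals appearing in the verification step have zero mean.
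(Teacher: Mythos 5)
Your proposal is correct and follows essentially the same route as the paper: freeze the mean field (and hence $q_t^\alpha$), solve the resulting LQ tracking problem by dynamic programming to obtain the feedback with the Riccati equation \eqref{eq:dyn-Pi} and the offset equation \eqref{eq:dyn-st}, then close the loop by deriving \eqref{eq:dyn-barx} and \eqref{eq:dyn-V} from the closed-loop dynamics (It\^o on $e_i^2$ for the variance) and invoke Gaussianity of the state under linear feedback to turn the quantile into the mean--standard-deviation constraint. The extra details you supply (verification argument, admissible class, comparison principle for $\Pi_t\ge 0$) only strengthen the paper's derivation, and your deferral of the fixed-point issue matches the lemma's ``if exists'' qualification.
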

The solution can be further simplified by investigating the dynamics for $\bar{x}$. If we take the second order derivative for $\bar{x}$ from \eqref{eq:dyn-barx}, we obtain 
\[
\begin{aligned}
	{\ddot{\bar{x}}}(t) & =  - \frac{b^2}{r}\dot{\Pi}_t \bar{x}(t) + (a - \frac{b^2}{r}\Pi_t) \dot{\bar{x}}(t)  -  \frac{b^2}{r} \dot{s_t}(t). 
\end{aligned}
\]
Then substituting the derivatives above using \eqref{eq:dyn-Pi},  \eqref{eq:dyn-st} and \eqref{eq:dyn-barx} yields the following  dynamics
\[
	{\ddot{\bar{x}}} (t) 	= a^2 \bar{x} (t) 
\]
with the initial condition $\bar{x}(0)= \mu_0$.
If $a\neq 0 $, the solution is then given by the following form
$
\bar{x}(t) = C_1 e^{at} + (\mu_0 - C_1) e^{-at}
$
where $C_1$ is the constant to be determined based on the boundary conditions for $s$ and $\Pi$ as follows: 
from \eqref{eq:dyn-barx}
we obtain (with $b\neq 0$)
\[
\begin{aligned}
s_t  & =  \frac{r}{b^2} \left[(a - \frac{b^2}{r}\Pi_t) \bar{x}- 	\dot{\bar{x}}\right]; 
\end{aligned}
\]
then applying the terminal conditions $s_T=0$ and $\Pi_T =0$ yields 
$
0  =  \frac{r}{b^2} \left[a \bar{x}(T)- 	\dot{\bar{x}}(T)\right], 
$ and hence 
$
a C_1 e^{aT} + a (\mu_0 - C_1) e^{-aT} = C_1 a e^{aT}-  (\mu_0 - C_1) a e^{-aT}
$
implying  $C_1 = \mu_0$. 
Thus the solution for $\bar{x}$ is given by 
\begin{equation}\label{eq:barx-exp}
	\bar{x} (t) = e^{at} \mu_0, \quad \forall t \in [0,T].
\end{equation}
It is easy to verify that for the case $a=0$, the solution \eqref{eq:barx-exp} also holds.

The feature that the mean does not depend on $q^{\alpha}_t$, $\Pi_t$ or $s_t$  allows us to have the simplified solution below. 

\begin{lemma}[Simplified MFG Solutions]\label{lem:simplified-mfg-sol}
	The solution to the mean field game problem defined by \eqref{eq:dyn}, \eqref{eq:cost} and \eqref{eq:constraint}, if exists, is given by 
\begin{equation}
	\begin{aligned}
		 u_i(t) &= - ~\frac{b}{r}(\Pi_t x_i(t)+ s_t)
		 	\end{aligned}
\end{equation}
where $(\Pi_{t})_{t\in[0,T]}$ solves
\begin{align}
		-\dot{\Pi}_t &= 2 a \Pi_t     -  \frac{b^2}{r}\Pi_t^2  +  q_t^\alpha, \quad \Pi_t\geq 0,  \quad  \Pi_T =   0 \label{eq:dyn-Pimv}\\
	\dot{V}(t)  & = 2(a - \frac{b^2}{r}\Pi_t) V(t)+ \sigma^2, \quad V(0)= V_0 \label{eq:dyn-Vmv}
\end{align}
and $(s_{t})_{t\in[0,T]}$ solves
\begin{align}
			-\dot{s}_t &= \big( a  -  \frac{b^2}{r} \Pi_t \big) {s}_t - q_t^\alpha\bar{x}(t), \quad s_T = 0
\end{align}
with 	  
\begin{equation} \label{eq:Q-constraint}
	{q}_t^{\alpha} = q (1+e^{\bar{x}(t) + \sqrt{V(t)} \Phi^{-1}_{\alpha}})
\end{equation}
 and ${\bar{x}}(t)  = e^{at} \mu_0$.
\end{lemma}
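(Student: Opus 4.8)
\; The claim is a structural reformulation of Lemma~\ref{lem:mfg-sol}: I will show that any solution of the coupled system \eqref{eq:dyn-Pi}--\eqref{eq:dyn-V} necessarily has $\bar{x}(t)=e^{at}\mu_0$, which then permits the mean to be eliminated as a known forcing term. The plan is to start from the mean equation \eqref{eq:dyn-barx} and differentiate once more in $t$, obtaining
$\ddot{\bar{x}} = -\tfrac{b^2}{r}\dot{\Pi}_t\,\bar{x} + \bigl(a - \tfrac{b^2}{r}\Pi_t\bigr)\dot{\bar{x}} - \tfrac{b^2}{r}\dot{s}_t$,
and then to eliminate $\dot{\Pi}_t$, $\dot{s}_t$ and $\dot{\bar{x}}$ using \eqref{eq:dyn-Pi}, \eqref{eq:dyn-st} and \eqref{eq:dyn-barx} respectively. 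Writing $A_t := a - \tfrac{b^2}{r}\Pi_t$ and collecting terms, the $s_t$-contributions cancel identically, and the coefficient of $\bar{x}$ collapses, once the $\Pi_t$, $\Pi_t^2$ and $q_t^\alpha$ pieces cancel pairwise, to the constant $a^2$. This yields the linear, autonomous second-order ODE $\ddot{\bar{x}}(t) = a^2\,\bar{x}(t)$, in which $\Pi_t$, $s_t$ and $q_t^\alpha$ no longer appear.

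Next I would fix the two integration constants. For $a\neq 0$ the general solution reads $\bar{x}(t) = C_1 e^{at} + C_2 e^{-at}$; the initial condition $\bar{x}(0)=\mu_0$ gives $C_1+C_2=\mu_0$, and a terminal condition comes from solving \eqref{eq:dyn-barx} for $s_t$, namely $s_t = \tfrac{r}{b^2}\bigl[A_t\,\bar{x}(t)-\dot{\bar{x}}(t)\bigr]$, and evaluating at $t=T$: since $\Pi_T=0$ and $s_T=0$ this forces $\dot{\bar{x}}(T)=a\,\bar{x}(T)$, whence $C_2=0$ and $\bar{x}(t)=e^{at}\mu_0$. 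The degenerate case $a=0$ is handled in the same way: $\ddot{\bar{x}}=0$ gives $\bar{x}(t)=\mu_0+Ct$, and $\dot{\bar{x}}(T)=0$ forces $C=0$, so $\bar{x}\equiv\mu_0=e^{0\cdot t}\mu_0$, consistent with \eqref{eq:barx-exp}.

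Finally I would substitute $\bar{x}(t)=e^{at}\mu_0$ into the quantile constraint \eqref{eq:Q-constraint}, so that $q_t^\alpha = q\bigl(1+e^{\,e^{at}\mu_0 + \sqrt{V(t)}\,\Phi^{-1}_\alpha}\bigr)$ becomes a functional of $V(t)$ alone. Consequently the Riccati equation \eqref{eq:dyn-Pi} and the variance equation \eqref{eq:dyn-V} close up into the coupled pair \eqref{eq:dyn-Pimv}--\eqref{eq:dyn-Vmv} in the unknowns $(\Pi_t,V(t))$; once this pair is solved the mean is explicitly known, and \eqref{eq:dyn-st} reduces to a scalar linear inhomogeneous terminal-value problem that determines $s_t$, with the control law \eqref{eq:feeback-form} left unchanged. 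Conversely, any solution of \eqref{eq:dyn-Pimv}--\eqref{eq:dyn-Vmv} together with $\bar{x}(t)=e^{at}\mu_0$ and the resulting $s_t$ solves the original system, so the two formulations are equivalent; no existence claim is made here, as the statement is conditional and existence/uniqueness of $(\Pi_t,V(t))$ is taken up in Section~\ref{sec:exist-unique}. The only step requiring care is the bookkeeping in the second-derivative computation that produces $\ddot{\bar{x}}=a^2\bar{x}$; everything else is direct substitution.
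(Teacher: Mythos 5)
Your proposal is correct and follows essentially the same route as the paper: differentiating \eqref{eq:dyn-barx}, eliminating $\dot{\Pi}_t$, $\dot{s}_t$, $\dot{\bar{x}}$ via \eqref{eq:dyn-Pi}, \eqref{eq:dyn-st}, \eqref{eq:dyn-barx} to obtain $\ddot{\bar{x}}=a^2\bar{x}$, fixing the constants through $\bar{x}(0)=\mu_0$ and the terminal relation $\dot{\bar{x}}(T)=a\bar{x}(T)$ implied by $s_T=\Pi_T=0$, and treating $a=0$ separately. Your explicit verification of the cancellations and of the degenerate case is a slightly more detailed version of exactly what the paper does, so nothing further is needed.
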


Clearly, the existence and uniqueness of solutions depend on the existence of the solution pair  $(\Pi_{t}, V(t))_{t\in[0,T]}$ to the coupled equations \eqref{eq:dyn-Pimv} and \eqref{eq:dyn-Vmv} with the coupling \eqref{eq:Q-constraint}. 

\section{Special Case with Variance Dependent Costs} \label{sec:special-case}

If the cost coefficient is defined by 
\begin{equation}\label{eq:variance-dependent-case}
	{q}_t^{\alpha} = q (1+e^{ \sqrt{V(t)} \Phi^{-1}_{\alpha}})
\end{equation}
 such that the mean does not appear in $q^{\alpha}_t$, then one can decouple the dynamics of $s(t)$ and $\bar{x}(t)$ via a decoupling Riccati equation (see e.g., \cite{huang2012social,bensoussan2016linear,salhab2016collective}). 
If we introduce 
$
s_t = P_t \bar{x}(t),
$
then
\begin{align}\label{eq:decoupling-pro}
	\dot{s}_t = \dot{P}_t \bar{x}(t)  + P_t \dot{\bar{x}}(t). 
\end{align}
Replacing $\dot{s}_t$ and $\dot{\bar{x}}(t)$ using \eqref{eq:dyn-barx} and \eqref{eq:dyn-st} yields
\[
\begin{aligned}
	-\dot{P}_t \bar{x}  
	&  = \bigg[ 2(a  -  \frac{b^2}{r} \Pi_t) P_t -  \frac{b^2}{r} P_t^2   - q_t^\alpha \bigg] \bar{x}. 
\end{aligned}
\]
Since this should hold for all $\bar{x}$, we obtain the following  Riccati equation that decouples the dynamics for $\bar{x}$ and $s$:
\[
\begin{aligned}
	-\dot{P}_t  = & ~ 2(a  -  \frac{b^2}{r} \Pi_t) P_t  -  \frac{b^2}{r} P_t^2   - q_t^\alpha,  \quad P_T =0 .
\end{aligned}
\]

\begin{remark}
Such an analysis for decoupling $s$ and $\bar{x}$ would not work  when $q_t^\alpha$ depends on the mean $\bar{x}$. This is because $P(\bar{x},t)$ depends on $\bar{x}$ through the coefficient $ q_t^\alpha$ of the Riccati equation for $P$ and in this case one  should introduce 
$
s_t = P(\bar{x}(t),t) \bar{x}(t).
$
Hence one should use 
\[
\begin{aligned}
	\dot{s}_t  =  \bigg[\frac{\partial }{\partial t} \dot{P}   + \frac{\partial P }{\partial{\bar{x}} } \cdot  \dot{\bar{x}}(t) \bigg]\bar{x}(t) +  P\dot{\bar{x}}(t)
\end{aligned}
\] to replace \eqref{eq:decoupling-pro}, 
and it may not lead to a decoupling Riccati equation for $P$.
\end{remark}
Thus, the solution for the  MFG problem defined by \eqref{eq:dyn}, \eqref{eq:cost} and \eqref{eq:variance-dependent-case},   if exists is given by
\begin{equation}
	\begin{aligned}
		 u_i(t) &= -  \frac{b}{r}(\Pi_t x_i(t)+ P_t \bar{x})\\
		 	\end{aligned}
\end{equation}
with $\Pi_t$ and $V(t)$ given by \eqref{eq:dyn-Pi} and \eqref{eq:dyn-V}, and $\bar{x}$ and $P_t$ given by
\begin{align}
	\dot{\bar{x}} & = (a -  \frac{b^2}{r}(\Pi_t+P_t)    ) \bar{x} , \quad \bar{x}(0)= \mu_0\\
			-\dot{P}_t &  =   2(a  -  \frac{b^2}{r} \Pi_t) P_t -  \frac{b^2}{r} P_t^2   - q_t^\alpha,  \quad P_T =0  \label{eq:Pt}
	\end{align}
with $
	q_t^\alpha = q (1+e^{ \sqrt{V(t)} \Phi^{-1}_{\alpha}})\geq 0$.
	We note that  the last equation \eqref{eq:Pt} can be solved after the two equations \eqref{eq:dyn-Pi} and \eqref{eq:dyn-V} for $\Pi_t$ and $V(t)$  are solved. 
\begin{remark}
The solution above has a special structure that leads to further simplifications.  
If we denote 
$
H_t = \Pi_t + P_t,
$
then summing up both sides of the dynamics for $\Pi$ and $P$ yields the dynamics for $H$ as follows:
\begin{equation}\label{eq:dyn-H}
	- \dot{H}_t= 2a H_t - \frac{b^2}{r}(H_t)^2, 
\end{equation}
with the terminal condition $ H_T =0.$
Clearly the solution is  $H_t=0$ for all $t\in [0,T]$.
Then the MFG solution can be replaced by 
\begin{equation}
	\begin{aligned}
		 u_i^*(t) &= -  \frac{b}{r}\Big(\Pi_t x_i(t)+ (H_t-\Pi_t) \bar{x}(t)\Big) = -\frac{b}{r}\Pi_t (x_i(t)-\bar{x}(t))
		 	\end{aligned}
\end{equation}
where the control gains are given by 
	\begin{align}
			-\dot{\Pi}_t &= 2 a \Pi_t     -  \frac{b^2}{r}\Pi_t^2  +  q_t^\alpha, \quad  \Pi_T =   0  \label{eq:Pi-ODE}\\
				\dot{V}(t)   &= 2(a - \frac{b^2}{r}\Pi_t) V(t)+ \sigma^2, \quad V(0)= V_0 \label{eq:V-ODE}
	\end{align} 
	with 
	$q_t^\alpha = q (1+e^{ \sqrt{V(t)} \Phi^{-1}_{\alpha}})\geq 0$,
	and the mean field dynamics are given by 
\begin{equation}
\begin{aligned}
	\dot{\bar{x}} & = (a -  \frac{b^2}{r} H_t    ) \bar{x} = a\bar{x} , \quad \bar{x}(0)= \mu_0,
\end{aligned}
\end{equation}
consistent with the solution in \eqref{eq:barx-exp}. 
Such a decomposition  of the control gains can be viewed as a special case of the decomposition for  network-coupled control problems detailed in  
\cite{ShuangAdityaTCNS20} using a spectral decoupling approach.

	However, in general for mean field game problems with mean field couplings in the dynamics, 
	the solution for $H_t := P_t + \Pi_t$ may not necessarily be zero since the boundary condition could be nonzero or the zero order term in the dynamics for $H_t$ could be non-zero, and for such cases an extra condition is needed for the time horizon to account for the  finite escape time. In addition, in cases where the state dimension is higher than $1$, one cannot have the structure $H_t = P_t + 
\Pi_t$  as the Riccati equation for $\Pi_t$ will be  a non-symmetric Riccati equation (see e.g., \cite{huang2019linear,ShuangPeterMinyiTAC21}).  
\end{remark}
 \section{Existence and Uniqueness of Solutions}\label{sec:exist-unique}

	In this section, we  establish conditions under which the two-point boundary value problem specified by \eqref{eq:dyn-Pimv} and \eqref{eq:dyn-Vmv} has a solution. 
\begin{proposition}[Existence]
 	Consider $[0,T]$ and $K  : = \{\Pi \in C([0, T], \mathbb{R}): \| \Pi\|_{\infty} \leq M\}$. If $T$ and $M$ satisfy the relation 
 	\begin{equation}\label{eq:T-M relation}
	T\left[2|a| M  + \frac{b^2}{r} M^2+ q  +  q \exp\left[ \mu^* + |\Phi^{-1}_{\alpha}| (V_0 +\sigma^2 { T } )^{\frac12} \exp \left( T \big(|a| + \frac{b^2}{r} M \big)  \right)  \right] \right]		 \leq M	,
\end{equation}
with $\mu^*  : = \max\{\mu_0, e^{aT}\mu_0\}$,
 	then the coupled differential equations \eqref{eq:dyn-Pimv} and \eqref{eq:dyn-Vmv} with the coupling constraint \eqref{eq:Q-constraint} have at least one joint solution $(V, \Pi)$ with $\Pi \in K$ and $V \in C([0,T], \mathbb{R})$. 
 \end{proposition}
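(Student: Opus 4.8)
The plan is to recast the two coupled equations \eqref{eq:dyn-Pimv}--\eqref{eq:dyn-Vmv} together with the coupling \eqref{eq:Q-constraint} as a single fixed-point problem on $K$ and to apply Schauder's fixed-point theorem. First I would define a map $\mathcal{T}: K \to C([0,T],\mathbb{R})$ as follows. Given $\Pi \in K$, the variance equation \eqref{eq:dyn-Vmv} is \emph{linear} in $V$ with the bounded continuous coefficient $2\bigl(a - \frac{b^2}{r}\Pi_t\bigr)$, so it has a unique solution $V^{\Pi} \in C([0,T],\mathbb{R})$ on the whole interval, given by the variation-of-constants formula; moreover $V^{\Pi}(t) \ge 0$ for all $t$, since $V_0 \ge 0$, $\sigma^2 \ge 0$, and $\dot V \ge 0$ whenever $V = 0$. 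Using $\bar x(t) = e^{at}\mu_0$ from \eqref{eq:barx-exp}, set $g^{\Pi}(t) := q\bigl(1 + e^{\bar x(t) + \sqrt{V^{\Pi}(t)}\,\Phi^{-1}_{\alpha}}\bigr)$, which is continuous and nonnegative, and define
\[
	(\mathcal{T}\Pi)(t) := \int_t^T \Bigl( 2a\,\Pi_s - \frac{b^2}{r}\,\Pi_s^2 + g^{\Pi}(s) \Bigr)\, ds, \qquad t \in [0,T],
\]
i.e.\ the \emph{integral (Picard) form} of the Riccati equation \eqref{eq:dyn-Pimv}. Working with this integral form is the key device: it is globally defined on all of $K$, so no finite-escape-time issue for the Riccati ODE ever arises, and any fixed point $\Pi^* = \mathcal{T}\Pi^*$ is automatically a $C^1$ function satisfying \eqref{eq:dyn-Pimv} with $\Pi^*_T = 0$, which, paired with $V^{\Pi^*}$, solves \eqref{eq:dyn-Vmv}, the coupling \eqref{eq:Q-constraint} then holding by the very construction of $g^{\Pi^*}$.

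Then I would establish the a priori bounds that turn \eqref{eq:T-M relation} into exactly the self-map condition $\mathcal{T}(K) \subseteq K$. For $\Pi \in K$ one has $\bigl|a - \frac{b^2}{r}\Pi_t\bigr| \le |a| + \frac{b^2}{r}M$, so a Gr\"onwall-type estimate on \eqref{eq:dyn-Vmv} gives
\[
	V^{\Pi}(t) \le (V_0 + \sigma^2 T)\exp\bigl( 2T(|a| + \tfrac{b^2}{r}M)\bigr), \qquad \sqrt{V^{\Pi}(t)} \le (V_0 + \sigma^2 T)^{1/2}\exp\bigl( T(|a| + \tfrac{b^2}{r}M)\bigr).
\]
Since $s \mapsto e^{as}\mu_0$ is monotone on $[0,T]$, $\bar x(s) \le \mu^* = \max\{\mu_0, e^{aT}\mu_0\}$, whence $g^{\Pi}(s) \le q + q\exp\bigl[ \mu^* + |\Phi^{-1}_{\alpha}|(V_0 + \sigma^2 T)^{1/2}\exp(T(|a| + \frac{b^2}{r}M))\bigr]$ uniformly in $s \in [0,T]$ and in $\Pi \in K$. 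Plugging this into the definition of $\mathcal{T}$ yields, for $\Pi \in K$,
\[
	\|\mathcal{T}\Pi\|_{\infty} \le T\Bigl[ 2|a|M + \frac{b^2}{r}M^2 + q + q\exp\bigl( \mu^* + |\Phi^{-1}_{\alpha}|(V_0 + \sigma^2 T)^{1/2}\exp(T(|a| + \tfrac{b^2}{r}M))\bigr)\Bigr] \le M,
\]
which is precisely hypothesis \eqref{eq:T-M relation}; hence $\mathcal{T}(K) \subseteq K$. Moreover the integrand defining $\mathcal{T}\Pi$ is bounded in absolute value by $M/T$ uniformly over $\Pi \in K$, so $\mathcal{T}(K)$ is a uniformly Lipschitz family in $t$ and, being also bounded, is relatively compact in $C([0,T],\mathbb{R})$ by the Arzel\`a--Ascoli theorem.

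Next I would verify continuity of $\mathcal{T}$ on $K$ in the sup-norm: if $\Pi^{(k)} \to \Pi$ in $K$, then $V^{\Pi^{(k)}} \to V^{\Pi}$ uniformly by continuous dependence of the linear ODE \eqref{eq:dyn-Vmv} on its coefficient (again a Gr\"onwall estimate on the difference), hence $\sqrt{V^{\Pi^{(k)}}} \to \sqrt{V^{\Pi}}$ and $g^{\Pi^{(k)}} \to g^{\Pi}$ uniformly (uniform continuity of $\sqrt{\cdot}$ on a compact interval and of $\exp$); combining this with $|(\Pi^{(k)}_s)^2 - \Pi_s^2| \le 2M\|\Pi^{(k)} - \Pi\|_{\infty}$ gives $\|\mathcal{T}\Pi^{(k)} - \mathcal{T}\Pi\|_{\infty} \to 0$. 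Since $K$ is a nonempty closed bounded convex subset of the Banach space $C([0,T],\mathbb{R})$ and $\mathcal{T}: K \to K$ is continuous with relatively compact image, Schauder's fixed-point theorem produces $\Pi^* \in K$ with $\mathcal{T}\Pi^* = \Pi^*$, and $(V^{\Pi^*}, \Pi^*)$ is the desired joint solution. Finally, the sign requirement $\Pi^*_t \ge 0$ stated in \eqref{eq:dyn-Pimv} follows from a standard comparison argument for scalar Riccati equations with nonnegative data: $\Pi^*$ is the value function of a finite-horizon deterministic LQ problem with nonnegative state weight $g^{\Pi^*} \ge 0$, zero terminal weight, and positive control weight $r$, hence is nonnegative.

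I expect the main obstacle to be the bookkeeping in the a priori estimates of the second step---propagating the bound $\|\Pi\|_{\infty} \le M$ through the linear variance ODE and then through the exponential in $g^{\Pi}$ so that the resulting nested-exponential bound matches the right-hand side of \eqref{eq:T-M relation} term by term (this is exactly why the somewhat intricate-looking quantity in \eqref{eq:T-M relation} is the one that appears). The remaining ingredients---global solvability of the linear variance ODE, continuity and relative compactness of $\mathcal{T}$, and the comparison argument for $\Pi^* \ge 0$---are routine.
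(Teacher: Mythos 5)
Your proposal is correct and follows essentially the same route as the paper: the Picard/integral form of the Riccati equation combined with the variation-of-constants formula for $V$, the a priori bound yielding exactly the self-map condition \eqref{eq:T-M relation}, Arzel\`a--Ascoli for relative compactness, and Schauder's fixed-point theorem on the closed bounded convex set $K$. In fact you supply two details the paper leaves implicit---the sup-norm continuity of $\mathcal{T}$ on $K$ (which Schauder's theorem does require) and the nonnegativity of $V^{\Pi}$ and of the fixed point $\Pi^*$---so no changes are needed.
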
 
 \begin{proof}
The solution to \eqref{eq:dyn-Vmv} written in the integral form is given by
\[
V(t) = \phi(t, 0)  V_0  + \sigma^2 \int_0^t \phi(t,\tau)  d\tau  
\]
where $ \phi(t,\tau) : = \text{exp} \left(\int_\tau^t 2(a-\frac{b^2}{r} \Pi_s)ds \right)$. 
Substituting $V(t)$ in \eqref{eq:dyn-Pimv}  yields the following 
\begin{equation} \label{eq:Pi}
	\begin{aligned}
	-\dot{\Pi}_t &= 2 a \Pi_t     -  \frac{b^2}{r}\Pi_t^2  + q+  q \text{exp}\bigg [ \bar{x}(t)+  \Phi^{-1}_{\alpha}\big( \phi(t, 0)  V_0  + \sigma^2 \int_0^t \phi(t,\tau)  d\tau\big)^{\frac12}  \bigg], \quad  \Pi_T =   0.
\end{aligned}
\end{equation}
Since \(\Pi_T = 0\), integrating both sides  from \(t\) to \(T\) yields
\begin{equation}
	\begin{aligned}
 &\Pi_t  = \int_t^T \bigg(2a \Pi_s - \frac{b^2}{r} \Pi_s^2 + q \\&  +  q\exp\bigg[\bar{x}(s)+ \Phi^{-1}_{\alpha}  \bigg(\text{exp}(\int_0^s 2(a-\frac{b^2}{r} \Pi_u)du)  V_0 + \sigma^2 \int_0^s \exp \left(\int_\tau^s 2(a - \frac{b^2}{r} \Pi_u ) du \right) d\tau \bigg)^{\frac12} \bigg]\bigg) ds.	
\end{aligned}
\end{equation}
Let  \(\mathcal{T}\) denote the operator on the Banach space of continuous functions  \(C([0, T], \mathbb{R})\) endowed with the uniform norm $\|\cdot\|_\infty$ (i.e., $\|x\|_\infty : = \sup_{t\in [0,T]}|x(t)|$ for $x \in C([0, T], \mathbb{R})$),  defined by the following: for any $\Pi \in C([0, T], \mathbb{R})$,
\begin{equation} \label{eq:Tao-Mapping}
	\begin{aligned}
	& \mathcal{T}(\Pi)(t)   = 
  \int_t^T \bigg(2a \Pi_s - \frac{b^2}{r} \Pi_s^2 + q \\&  +  q\exp\left[\bar{x}(s) + \Phi^{-1}_{\alpha}  \bigg(\text{exp}(\int_0^s 2(a-\frac{b^2}{r} \Pi_s)ds)  V_0 + \sigma^2 \int_0^s \exp \left(\int_\tau^s 2(a - \frac{b^2}{r} \Pi_u ) du \right) d\tau \bigg)^{\frac12} \right]\bigg) ds
\end{aligned}
\end{equation}
with $t \in [0,T]$. 

Let's consider the set of uniformly bounded continuous function over $[0,T]$ given by 
\[
K  : = \{\Pi \in C([0, T], \mathbb{R}): \| \Pi\|_{\infty} \leq M\}.
\]
{Then it is easy to verify that $K$ is a {nonempty},  {bounded},  {convex} and {{closed}}  subset of the Banach space \(C([0, T], \mathbb{R})\) with the uniform norm $\|\cdot\|_\infty$.} 
For any element $\Pi \in K$,  we  have the following: for any $t\in [0,T]$
\[
\begin{aligned}
 \|\mathcal{T}&(\Pi)(t)\|_\infty \leq   T(2|a| M  + \frac{b^2}{r} M^2 +q) \\  &
~ + q \int_t^T  \left|\exp\left[\bar{x}(s)+ \Phi^{-1}_{\alpha}  \bigg( \phi(s, 0)  V_0 +\sigma^2 \int_0^s \exp \left(\int_\tau^s 2\big(a - \frac{b^2}{r} \Pi_u \big) du \right) d\tau \bigg)\right]^{\frac12}\right| ds	\\
& \leq  T(2|a| M  + \frac{b^2}{r} M^2+ q)  \\&
~  +  Tq \exp\left[\mu^*+ |\Phi^{-1}_{\alpha}| \bigg( \exp  \left(2 T \big(|a| + \frac{b^2}{r} M \big)  \right)  V_0 +\sigma^2  T \exp \left(2 T \big(|a| + \frac{b^2}{r} M \big)  \right) \bigg)^{\frac12} \right]	\\
& = T\left[2|a| M  + \frac{b^2}{r} M^2+ q  +  q \exp\left[ \mu^* +   |\Phi^{-1}_{\alpha}| (V_0 + \sigma^2{T} )^{\frac12} \exp \left( T \big(|a| + \frac{b^2}{r} M \big)  \right)  \right] \right]		
\end{aligned}
\] 
with $\mu^*  : = \max\{\mu_0, e^{aT}\mu_0\}$. 
Therefore if we select the time $T$ small enough such that \eqref{eq:T-M relation} holds,
 we have 
\[
\|\mathcal{T}(\Pi)(t)\|_\infty \leq M, \quad \forall \Pi \in K,
\]
that is, after the mapping $\mathcal{T}$, the image $\mathcal{T}(\Pi)$ stays inside $K$. 

We have demonstrated that $\{\mathcal{T}(\Pi): \Pi \in K\}$  under the condition \eqref{eq:T-M relation} above is uniformly bounded. 
 Furthermore, one can show that $\{\mathcal{T}(\Pi): \Pi \in K\}$ is equicontinuous (see Lemma~\ref{lem:equicontinuouity}).
{Then by Arzela-Ascoli Theorem \cite[p.175]{conway2013course},  
the set $\{\mathcal{T}(\Pi): \Pi \in K\}$ is totally bounded and hence the closure of $\{\mathcal{T}(\Pi): \Pi \in K\}$ is compact \cite[Appendix A4]{rudin1976principles}. By definition \cite[Def. 4.1]{conway2013course}, 
  \(\mathcal{T}: K \to K \) is a compact operator.} 
 Since \( K \) be a closed, bounded, and convex subset of the normed space $C([0,T]; \mathbb{R})$ (endowed with the uniform norm) and   \( \mathcal{T}: K \to C([0,T]; \mathbb{R}) \) is a compact map such that \( \{\mathcal{T}(
 \Pi): \Pi \in K \}\subseteq K \) (under the condition \eqref{eq:T-M relation}), an application of the Schauder fixed-point theorem \cite[p.~150]{conway2013course} implies the equation \eqref{eq:Pi} has at least one a solution $\Pi \in K$. Then, given $\Pi \in K$, the equation \eqref{eq:dyn-Vmv} is a linear time-varying differential equation with continuous coefficients and hence the solution exists. Therefore
 	the coupled differential equations \eqref{eq:dyn-Pimv} and \eqref{eq:dyn-Vmv}  have at least one joint solution $(V, \Pi)$ with $\Pi \in K$ and $V \in C([0,T], \mathbb{R})$. 
 \end{proof}
 
 
 \begin{remark}
 	 We note that the Schauder fixed point theorem only allows us prove the existence of solutions, but the uniqueness is not guaranteed.  
 	 
 We use the Banach fixed point theorem to establish the existence of a unique solution over a small time horizon. 
  \end{remark}
  \begin{proposition}[Uniqueness]
 	Consider $[0,T]$ and $K  : = \{\Pi \in C([0, T], \mathbb{R}): \| \Pi\|_{\infty} \leq M\}$. If $T$ and $M$ satisfy  %
 	the following inequality %
 	\begin{equation} \label{eq:contraction-condition}
	T\bigg( 2|a| +\frac{b^2}{r} M^2 +q |\Phi^{-1}_{\alpha}| \frac{b^2}{r}T (V_0 + \sigma^2 T)^{\frac12} e^{\mu^*+ 3T(|a|+\frac{b^2}{r} M)+ |\Phi^{-1}_{\alpha}| [(V_0 + \sigma^2 T )^{\frac12} e^{T(|a|+ \frac{b^2}{r}M)}]}    \bigg)< 1,
\end{equation}
with $\mu^*  : = \max\{\mu_0, e^{aT}\mu_0\}$, 
 	 then    the coupled differential equations \eqref{eq:dyn-Pimv} and \eqref{eq:dyn-Vmv} with the coupling constraint \eqref{eq:Q-constraint}  have a unique  solution pair $(\Pi, V) \in K\times C([0,T], \mathbb{R})$. 
 \end{proposition}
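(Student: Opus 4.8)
The plan is to invoke the Banach fixed-point theorem for the same integral operator $\mathcal{T}$ of \eqref{eq:Tao-Mapping}, regarded this time as a self-map of the closed set $K=\{\Pi\in C([0,T],\mathbb{R}):\|\Pi\|_{\infty}\le M\}$. The previous proposition, together with the invariance $\mathcal{T}(K)\subseteq K$ it supplies, already gives at least one fixed point of $\mathcal{T}$ in $K$; it therefore suffices to show that, under \eqref{eq:contraction-condition}, the restriction $\mathcal{T}|_{K}$ is a strict contraction in the uniform norm, which forces that fixed point to be unique in $K$. The resulting unique $\Pi\in K$ solves \eqref{eq:Pi}, and substituting it into \eqref{eq:dyn-Vmv} leaves a linear time-varying ODE with continuous coefficients whose solution $V\in C([0,T],\mathbb{R})$ is then the unique companion, so the pair $(\Pi,V)\in K\times C([0,T],\mathbb{R})$ is unique.

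To obtain the contraction estimate, fix $\Pi^{1},\Pi^{2}\in K$ and write $V^{i}(s)=\phi^{i}(s,0)V_{0}+\sigma^{2}\int_{0}^{s}\phi^{i}(s,\tau)\,d\tau$, with $\phi^{i}(s,\tau)=\exp\!\left(\int_{\tau}^{s}2(a-\frac{b^{2}}{r}\Pi^{i}_{u})\,du\right)$, for the variance generated by $\Pi^{i}$. Then $\mathcal{T}(\Pi^{1})(t)-\mathcal{T}(\Pi^{2})(t)$ is the integral over $[t,T]$ of the differences of the four summands in \eqref{eq:Tao-Mapping}: the $q$ terms cancel, the linear term is bounded by $2|a|T\|\Pi^{1}-\Pi^{2}\|_{\infty}$, and, using $|(\Pi^{1}_{s})^{2}-(\Pi^{2}_{s})^{2}|\le(|\Pi^{1}_{s}|+|\Pi^{2}_{s}|)\,|\Pi^{1}_{s}-\Pi^{2}_{s}|\le 2M|\Pi^{1}_{s}-\Pi^{2}_{s}|$, the quadratic term is bounded by $\frac{2b^{2}}{r}MT\|\Pi^{1}-\Pi^{2}\|_{\infty}$. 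Everything substantive sits in the exponential term, which I would estimate along the chain $\Pi^{i}\mapsto\phi^{i}\mapsto V^{i}\mapsto\sqrt{V^{i}}\mapsto e^{\bar{x}(s)+\Phi^{-1}_{\alpha}\sqrt{V^{i}(s)}}$.

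From $\phi^{i}(s,\tau)\le e^{2(s-\tau)(|a|+\frac{b^{2}}{r}M)}$ and the representation of $V^{i}$, one gets the a priori bounds $\sqrt{V^{i}(s)}\le W:=(V_{0}+\sigma^{2}T)^{1/2}e^{T(|a|+\frac{b^{2}}{r}M)}$ and $\bar{x}(s)=e^{as}\mu_{0}\le\mu^{*}$, so $|e^{p}-e^{q}|\le\max(e^{p},e^{q})|p-q|$ reduces the exponential term to $q\,|\Phi^{-1}_{\alpha}|\,e^{\mu^{*}+|\Phi^{-1}_{\alpha}|W}\int_{t}^{T}|\sqrt{V^{1}(s)}-\sqrt{V^{2}(s)}|\,ds$. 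Next, $\Delta V:=V^{1}-V^{2}$ solves the linear equation $\dot{\Delta V}=2(a-\frac{b^{2}}{r}\Pi^{1})\Delta V-\frac{2b^{2}}{r}(\Pi^{1}-\Pi^{2})V^{2}$ with $\Delta V(0)=0$; variation of constants, the bound $V^{2}(\tau)\le(V_{0}+\sigma^{2}\tau)e^{2\tau(|a|+\frac{b^{2}}{r}M)}$, and the combination $e^{2(s-\tau)(\cdots)}e^{2\tau(\cdots)}=e^{2s(\cdots)}$ yield $|\Delta V(s)|\le\frac{2b^{2}}{r}(V_{0}+\sigma^{2}T)\,e^{2T(|a|+\frac{b^{2}}{r}M)}\,s\,\|\Pi^{1}-\Pi^{2}\|_{\infty}$, i.e., a bound vanishing linearly as $s\to0$. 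Finally, passing to the square root through $|\sqrt{V^{1}(s)}-\sqrt{V^{2}(s)}|=|\Delta V(s)|/(\sqrt{V^{1}(s)}+\sqrt{V^{2}(s)})$ and the lower bound $\sqrt{V^{i}(s)}\ge\sigma\sqrt{s}\,e^{-T(|a|+\frac{b^{2}}{r}M)}$ (treating $\sigma=0$, where $\sqrt{V^{i}}=\sqrt{V_{0}}\,\sqrt{\phi^{i}(s,0)}$ is itself an exponential, and $V_{0}=\sigma=0$, where $V^{i}\equiv0$, separately) cancels the $s^{-1/2}$ factor against $s$ and leaves the integrand bounded by $\frac{b^{2}}{r}T(V_{0}+\sigma^{2}T)^{1/2}e^{3T(|a|+\frac{b^{2}}{r}M)}\|\Pi^{1}-\Pi^{2}\|_{\infty}$. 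Collecting the four estimates gives $\|\mathcal{T}(\Pi^{1})-\mathcal{T}(\Pi^{2})\|_{\infty}\le L\|\Pi^{1}-\Pi^{2}\|_{\infty}$ with $L$ of the form of the left-hand side of \eqref{eq:contraction-condition}; since $L<1$, $\mathcal{T}|_{K}$ is a contraction and the Banach fixed-point theorem finishes the argument.

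The step I expect to be the main obstacle is precisely the last one: $v\mapsto\sqrt{v}$ is not Lipschitz at the origin and $V_{0}$ may be zero, so a naive bound would either lose a power of $\|\Pi^{1}-\Pi^{2}\|_{\infty}$ (destroying the contraction) or produce a constant that blows up as $\sigma\to0$. The resolution is structural rather than computational: because $V^{1}$ and $V^{2}$ satisfy the same linear ODE with the common initial value $V_{0}$, their difference vanishes at $t=0$ at rate $O(s)$, while the additive-noise term $\sigma^{2}\,dt$ forces $\sqrt{V^{i}(s)}$ to grow at least like $\sigma\sqrt{s}$; matching these two rates (with $\sigma=0$ handled via the exponential form of $\sqrt{\phi^{i}}$) keeps the reduced integrand finite and, after integration in $s$, produces the constant of the required form appearing in \eqref{eq:contraction-condition}. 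Everything else reduces to routine Gronwall / variation-of-constants bounds on the bounded set $K$.
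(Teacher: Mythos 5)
Your overall route is the same as the paper's: show that under \eqref{eq:contraction-condition} the operator $\mathcal{T}$ of \eqref{eq:Tao-Mapping} is a strict contraction on the complete set $K$ in the uniform norm, apply the Banach fixed-point theorem to get a unique $\Pi\in K$ solving \eqref{eq:Pi}, and recover $V$ uniquely from the linear time-varying equation \eqref{eq:dyn-Vmv}. Where you genuinely differ is in the one nontrivial estimate, the Lipschitz bound for the term $\exp[\bar{x}(s)+\Phi^{-1}_{\alpha}\sqrt{V(s)}]$. The paper goes through $|\phi-\tilde{\phi}|$ and its Lemma~\ref{lem:sqrt-bound}/Corollary~\ref{cor:sqrt-bound-scale}, which requires a \emph{uniform} positive lower bound on $V(s)$; its bound \eqref{eq:phi-lower}, claiming $I_1(s)\ge(\Phi^{-1}_{\alpha})^2(V_0+\sigma^2T)e^{-2T(|a|+\frac{b^2}{r}M)}$ for all $s$, is actually incorrect near $s=0$ (the honest bound is $(\Phi^{-1}_{\alpha})^2(V_0+\sigma^2 s)e^{-2T(|a|+\frac{b^2}{r}M)}$, which vanishes at $s=0$ when $V_0=0$). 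You instead work with the ODE for $\Delta V=V^1-V^2$, use variation of constants with $\Delta V(0)=0$ to get a numerator that vanishes like $s$, and divide by $\sqrt{V^1}+\sqrt{V^2}$ via the exact identity. This is the structurally correct way to handle the non-Lipschitz square root at a possibly degenerate initial variance, and it still produces the constant appearing in \eqref{eq:contraction-condition}; in that sense your argument repairs the weak point of the paper's own proof rather than merely reproducing it.

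Two quantitative caveats. First, the lower bound you actually quote, $\sqrt{V^i(s)}\ge\sigma\sqrt{s}\,e^{-T(|a|+\frac{b^2}{r}M)}$, is not enough by itself: combined with your numerator it leaves a factor $(V_0+\sigma^2 s)/\sigma$, which blows up as $\sigma\to0$ with $V_0>0$ and does not reduce to $T(V_0+\sigma^2T)^{1/2}$. Use instead the full variation-of-constants lower bound $V^i(s)\ge(V_0+\sigma^2 s)e^{-2T(|a|+\frac{b^2}{r}M)}$, and keep $(V_0+\sigma^2 s)$ (not $V_0+\sigma^2T$) in the numerator; then $s(V_0+\sigma^2 s)/(V_0+\sigma^2 s)^{1/2}\le T(V_0+\sigma^2T)^{1/2}$ gives exactly the claimed integrand bound, and your separate $\sigma=0$ case becomes unnecessary. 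Second, your Lipschitz coefficient for the quadratic term is $\tfrac{2b^2}{r}M$, whereas \eqref{eq:contraction-condition} carries $\tfrac{b^2}{r}M^2$; the same discrepancy sits inside the paper's own proof, so it does not reflect on your reasoning, but your final constant is not literally the left-hand side of \eqref{eq:contraction-condition} unless $M\ge 2$, and you should state the contraction condition you actually prove (or note the adjustment) rather than assert the match.
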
 
\begin{proof}
 For any two function $\Pi, ~\tilde{\Pi} \in K \subset C([0, T], 
 \mathbb{R})$, we have 
 \begin{equation*}
 	\begin{aligned}
 	|\mathcal{T}&(\Pi)(t) 	- \mathcal{T}(\tilde{\Pi})(t)| \\& \leq  \int_t^T \bigg|2a \Pi_s - \frac{b^2}{r} \Pi_s^2  		%
 		 + q +q\text{exp}\bigg [ \bar{x}(s)+   \Phi^{-1}_{\alpha}\big( \phi(s, 0)  V_0  + \sigma^2 \int_0^s \phi(s,\tau)  d\tau\big)^{\frac12}  \bigg] \\
 		 &\quad  
 		- \left(2a \tilde{\Pi}_s - \frac{b^2}{r} \tilde{\Pi}_s^2 ds
 		+ q +q\text{exp}\bigg [\bar{x}(s)+  \Phi^{-1}_{\alpha}\big( \tilde{\phi}(s, 0)  V_0  + \sigma^2 \int_0^t \tilde{\phi}(s,\tau)  d\tau\big)^{\frac12}  \bigg]\right) \bigg| ds\\
 		& =  \int_t^T \bigg|2a (\Pi_s-\tilde{\Pi}_s) - \frac{b^2}{r} (\Pi_s-\tilde{\Pi}_s)(\Pi_s+\tilde{\Pi}_s)  
 		+ q e^{\bar{x}(s)}  {\Delta }\bigg|ds
 	\end{aligned}
 \end{equation*}
 with 
 \[
 \begin{aligned}
 	 \Delta : = \text{exp}\bigg [ \Phi^{-1}_{\alpha}  \big( \phi(s, 0)  V_0  + \sigma^2 \int_0^s \phi(s,\tau)  d\tau\big)^{\frac12}  \bigg]~ - \text{exp}\bigg [ \Phi^{-1}_{\alpha} \big( \tilde{\phi}(s, 0)  V_0  + \sigma^2 \int_0^s \tilde{\phi}(s,\tau)  d\tau\big)^{\frac12}  \bigg]
 \end{aligned}
 \]
 and 
 \( 
 \tilde{\phi}(s,\tau) : = \text{exp} \left(\int_\tau^s 2(a-\frac{b^2}{r} \tilde{\Pi}_u)du \right). 
\)
In order to apply the Banach fixed point theorem, {we need to bound $\Delta$  in terms of $\|\Pi-\tilde{\Pi}\|_\infty$ to obtain a contraction condition.}
 We first need to establish the upper and lower bounds for the following term
\[
I_1(s) := (\Phi^{-1}_{\alpha})^2\big( \phi(s, 0)  V_0  + \sigma^2 \int_0^s \phi(s,\tau)  d\tau\big), \quad \forall s\in[0,T].
\] 
The upper bound is given by 
\begin{equation} \label{eq:phi-upper}
	\begin{aligned}
	 | I_1(s)| & = (\Phi^{-1}_{\alpha})^2\bigg | \text{exp} \left(\int_0^s 2(a-\frac{b^2}{r} \Pi_\tau)d\tau \right) V_0 + \sigma^2 \int_0^s \text{exp} \left(\int_\tau^s 2(a-\frac{b^2}{r} \Pi_q)dq \right) d\tau \bigg | \\
	& \leq (\Phi^{-1}_{\alpha})^2(V_0 + \sigma^2 T ) e^{(2T(|a|+ \frac{b^2}{r}M))}
\end{aligned}
\end{equation}
and  the lower bound by
\begin{equation} \label{eq:phi-lower}
	\begin{aligned}
	 I_1(s) %
	& \geq (\Phi^{-1}_{\alpha})^2 (V_0 + \sigma^2 T ) e^{(2T(-|a|- \frac{b^2}{r}M))} > 0.
\end{aligned}
\end{equation}
Furthermore, for all $s,\tau \in [0,T]$,
\begin{equation}\label{eq:phi-tildephi}
	\begin{aligned}
	(\Phi^{-1}_{\alpha})^2 |\phi(s,\tau)- \tilde{\phi}(s,\tau)| &= (\Phi^{-1}_{\alpha})^2\bigg|\text{exp} \left(\int_\tau^s 2(a-\frac{b^2}{r} \Pi_q)dq \right) - \text{exp} \left(\int_\tau^s 2(a-\frac{b^2}{r} \tilde{\Pi}_q)dq \right)\bigg|\\
	&  \leq (\Phi^{-1}_{\alpha})^2 e^{2T(|a|+\frac{b^2}{r} M)}\cdot \frac{{2} b^2}{r}T \cdot \|\Pi - \tilde{\Pi}\|_\infty 
\end{aligned}
\end{equation}
where the last step we use the fact that $|e^a - e^b|\leq e^{\max\{|a|,|b|\}}|a-b|$.
Hence Corollary \ref{cor:sqrt-bound-scale}, together with \eqref{eq:phi-lower} and \eqref{eq:phi-upper} and \eqref{eq:phi-tildephi} implies
\[
\begin{aligned}
	|\Delta|  &\leq  \frac{\text{exp}[|\Phi^{-1}_{\alpha}|(V_0 + \sigma^2 T )^{\frac12} e^{(T(|a|+ \frac{b^2}{r}M))}]}{2|\Phi^{-1}_{\alpha}|(V_0 + \sigma^2 T )^{\frac12} e^{(T(-|a|- \frac{b^2}{r}M))}} \\& \cdot (\Phi^{-1}_{\alpha})^2 \bigg| (\phi(s, 0) - \tilde{\phi}(s, 0)  ) V_0  + \sigma^2 \int_0^s (\phi(s,\tau)- \tilde{\phi}(s,\tau))  d\tau  \bigg| 
	\\& 
	\leq 
	\frac{\text{exp}[|\Phi^{-1}_{\alpha}|(V_0 + \sigma^2 T )^{\frac12} e^{T(|a|+ \frac{b^2}{r}M)}]}{2|\Phi^{-1}_{\alpha}|(V_0 + \sigma^2 T )^{\frac12} e^{T(-|a|- \frac{b^2}{r}M)}} \cdot  (\Phi^{-1}_{\alpha})^2 \cdot e^{2T(|a|+\frac{b^2}{r} M)}\cdot \frac{{2}b^2}{r}T\cdot (V_0 + \sigma^2 T) \cdot \|\Pi - \tilde{\Pi}\|_\infty \\
	& = e^{3T(|a|+\frac{b^2}{r} M)}\cdot e^{[|\Phi^{-1}_{\alpha}|(V_0 + \sigma^2 T )^{\frac12} e^{T(|a|+ \frac{b^2}{r}M)}]} \cdot  |\Phi^{-1}_{\alpha}| \cdot \frac{b^2}{r}T\cdot (V_0 + \sigma^2 T)^{\frac12} \cdot \|\Pi - \tilde{\Pi}\|_\infty   .
\end{aligned}
\]
Hence for all $t \in [0,T]$, the following holds
 \begin{equation}
 	\begin{aligned}
 		&|\mathcal{T}(\Pi)(t) - \mathcal{T}(\tilde{\Pi})(t)| \leq    \int_t^T \bigg|2a (\Pi_s-\tilde{\Pi}_s) - \frac{b^2}{r} (\Pi_s-\tilde{\Pi}_s)(\Pi_s+\tilde{\Pi}_s)  
 		+ q e^{\bar{x}(s)} \Delta \bigg|ds\\
 		& \leq T\bigg( 2|a| +\frac{b^2}{r} M^2  \\
 		& ~ +q  e^{\mu^* + 3T(|a|+\frac{b^2}{r} M)+ |\Phi^{-1}_{\alpha}|[(V_0 + \sigma^2 T )^{\frac12} e^{T(|a|+ \frac{b^2}{r}M)}]} \cdot  |\Phi^{-1}_{\alpha}| \cdot  \frac{b^2}{r}T (V_0 + \sigma^2 T)^{\frac12} \bigg)\|\Pi - \tilde{\Pi}\|_\infty 
 	\end{aligned}
 \end{equation}
 with $\mu^*  : = \max\{\mu_0, e^{aT}\mu_0\}$.
 Therefore 
 \begin{equation}
 	\begin{aligned}
 		\|&\mathcal{T} (\Pi) -\mathcal{T}(\tilde{\Pi})\|_\infty  \leq  T\bigg( 2|a| +\frac{b^2}{r} M^2 \\
 		&+q e^{\mu^*+ 3T(|a|+\frac{b^2}{r} M)+ |\Phi^{-1}_{\alpha}| [(V_0 + \sigma^2 T )^{\frac12} e^{T(|a|+ \frac{b^2}{r}M)}]} \cdot |\Phi^{-1}_{\alpha}| \cdot  \frac{b^2}{r}T (V_0 + \sigma^2 T)^{\frac12} \bigg)\|\Pi - \tilde{\Pi}\|_\infty  .
 	\end{aligned}
 \end{equation}
If we select $M$ and $T$ such that the inequality \eqref{eq:contraction-condition} holds,  
then the mapping $\mathcal{T}$ is a contraction under the sup norm  in $K$ over a small interval $[0, T]$. 
Furthermore, $K$ is complete under the sup norm  since $K$ is a closed subset of the complete metric space $C([0,T], R)$ under the sup norm (see e.g., \cite[p. 54]{rudin1976principles}). Then an  application of the Banach fixed point theorem implies that 
the equation \eqref{eq:Pi} has a unique solution $\Pi \in K$. This further implies that the joint solution pair $(\Pi, V)$ to \eqref{eq:dyn-Pimv} and \eqref{eq:dyn-Vmv} is unique with $\Pi \in K$ and $V\in C([0,T], \mathbb{R})$ and hence the proof is complete.
\end{proof}
 
\section{Numerical Illustrations} \label{sec:numerics}
Numerical results for solutions of the two-point boundary value problem specified by \eqref{eq:dyn-Pimv} and \eqref{eq:dyn-Vmv} with  the coupling \eqref{eq:Q-constraint} are demonstrated in Figure~\ref{fig:all_figures}.  Simulation results for implementing the MFG solution  are presented below in Figure~\ref{fig:cost-mf-comparision}, Figure~\ref{fig:cost} and Figure~\ref{fig:simulation-trajectory-20000}. 
\begin{figure}
    \centering
    \subfigure[$q_t^\alpha = q (1+e^{ \sqrt{V(t)} \Phi^{-1}_{\alpha}})\geq q\geq 0$]{
        \includegraphics[width=0.35\textwidth]{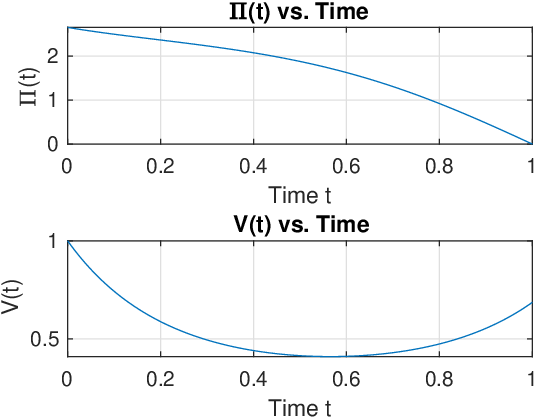}
        \label{fig:VPQeq}
    }
    \quad 
    \subfigure[$q_t^\alpha = q \geq 0$]{
 \includegraphics[width=0.35\textwidth]{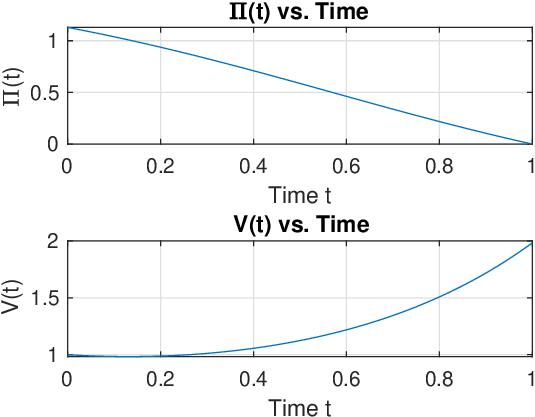}
        \label{fig:figure2}
    }
    \caption{Trajectories of $V$ and $\Pi$.  We see a clear reduction of the variance $V$ in (a) compared to that in (b), and $\Pi$ is larger in (a) than in (b).  The parameters are $a = 0.5$, $\mu_0=0$, $b=r=1$, $\sigma =V_0=q=1$, $\alpha = 0.95$, and $T=1$. In this case, the inequality \eqref{eq:T-M relation} is not satisfied, but the numerical solution to the two-point boundary value problem (specified by \eqref{eq:dyn-Pimv} and \eqref{eq:dyn-Vmv} with the coupling \eqref{eq:Q-constraint}) still exists as the inequality in \eqref{eq:T-M relation} is only a sufficient condition for the existence.   }
    \label{fig:all_figures}
\end{figure}

\begin{figure}
\subfigure[]{
	\includegraphics[width=6cm]{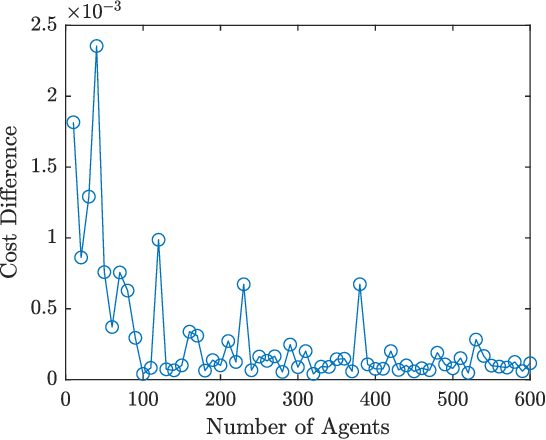}} 
\subfigure[]	{ 
	\includegraphics[width=6cm]{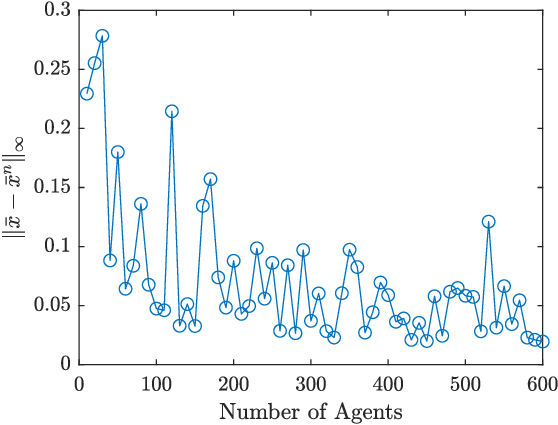}}\caption{The difference between the cost of the MFG tracking control and that of the optimal tracking control (assuming the data of all other agents are given a priori), as the number of agents increases, is illustrated in (a). The maximum difference (over time) between the limit population mean  and the mean of the finite population, as the number of agents increases, is illustrated in  (b). The parameters are the same as those in Figure~\ref{fig:simulation-trajectory-20000}.}\label{fig:cost-mf-comparision}
\end{figure}

\begin{figure}
	\includegraphics[width =8cm]{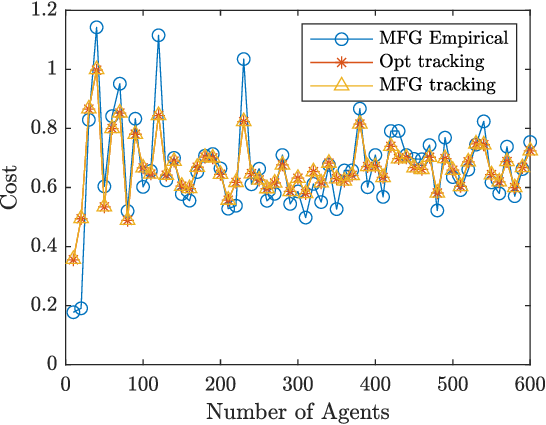}
	\caption{The agent cost of the optimal tracking solution (assuming the state trajectories of other agents are given a priori) is illustrated in star points, and the agent cost using the MFG solution (assuming the state trajectories of other agents are given a priori) is illustrated in triangle points, and finally the agent cost (by computing empirical average of cost over simulations) of using the MFG solution is illustrated in circle points. All the simulations use the same parameters as those in Figure~\ref{fig:simulation-trajectory-20000}.}\label{fig:cost}
\end{figure}

\begin{figure}
		\includegraphics[width=1\textwidth]{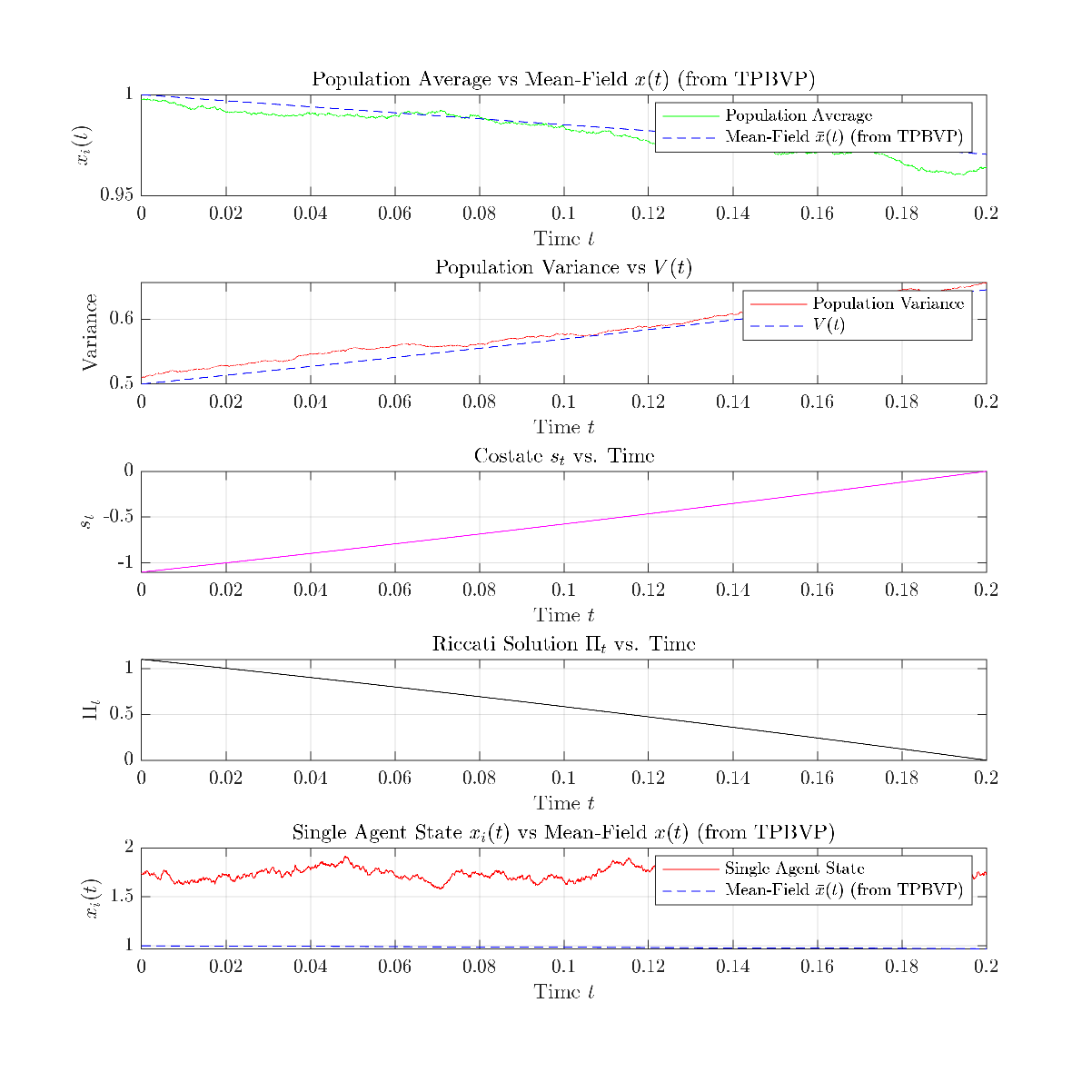}
		\caption{Simulation results with  2000 agents using MFG solutions. The parameters in the simulations are
	\( a = -0.15 \), \( b = 0.75 \), \( r = 3.5 \), \( \sigma = 1 \), \( V_0 = 0.5 \), \( \alpha = 0.975 \), \( q = 0.45 \), \( T = 0.2 \) and \( \mu_0 = 1.0 \).
 In this case, both inequalities  \eqref{eq:T-M relation} and \eqref{eq:contraction-condition} are satisfied for \( M = 3 \).} \label{fig:simulation-trajectory-20000}
\end{figure}

\section{Conclusion}\label{sec:conclusion}
This paper presents an initial investigation of the linear quadratic MFG problems with the cost coefficients that depend on the quantile of the mean field distribution. Such problems can be transformed into linear quadratic MFG problems with mean-variance dependent cost coefficients. The corresponding solution involves two coupled differential equations:  one Riccati equation and one variance evolution equation. Conditions under which the solution exists and is unique are established.  

 Further investigation should include  the following directions:  
(a) similar formulations where each agent has a vector state, (b) general forms of MFGs with mean-variance dependence, (c) $\varepsilon$-Nash properties for such problems, (d)  the case with mean-field couplings in the agent dynamics, (e)  similar problems where the model parameters are non-uniform and follow certain distributions, and (f) the extensions of this work to network-coupled mean field systems. 

\section{Appendix}
 \begin{lemma} \label{lem:sqrt-bound} For $a>0$ and $b>0$, the following inequality holds
  	\[
 |e^{\sqrt{a}} -e^{\sqrt{b}}| \leq \frac{e^{{\max\{\sqrt{|a|}, \sqrt{|b|}\}}}}{2\min\{\sqrt{a}, \sqrt{b}\}} \cdot |a-b|.
 \]
  \end{lemma}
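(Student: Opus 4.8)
The plan is to treat this as a direct consequence of the mean value theorem applied to the scalar function $f(x) := e^{\sqrt{x}}$ on the positive half-line. First I would record that $f$ is continuously differentiable on $(0,\infty)$ with
\[
f'(x) = \frac{e^{\sqrt{x}}}{2\sqrt{x}}, \qquad x > 0.
\]
Without loss of generality assume $a \neq b$ (the inequality is trivial when $a = b$), and apply the mean value theorem to $f$ on the closed interval with endpoints $a$ and $b$: there exists $\xi$ strictly between $a$ and $b$ such that
\[
e^{\sqrt{a}} - e^{\sqrt{b}} = f(a) - f(b) = f'(\xi)\,(a-b) = \frac{e^{\sqrt{\xi}}}{2\sqrt{\xi}}\,(a-b).
\]

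Next I would bound $f'(\xi)$. Since $\xi$ lies between $a$ and $b$ and both are positive, we have $\min\{a,b\} \le \xi \le \max\{a,b\}$, hence, by monotonicity of the square root, $\min\{\sqrt{a},\sqrt{b}\} \le \sqrt{\xi} \le \max\{\sqrt{a},\sqrt{b}\}$. Using monotonicity of the exponential for the numerator and the lower bound on $\sqrt{\xi}$ for the denominator gives
\[
\frac{e^{\sqrt{\xi}}}{2\sqrt{\xi}} \;\le\; \frac{e^{\max\{\sqrt{a},\sqrt{b}\}}}{2\min\{\sqrt{a},\sqrt{b}\}}.
\]
Taking absolute values in the mean value identity and substituting this bound yields
\[
\bigl|e^{\sqrt{a}} - e^{\sqrt{b}}\bigr| \;\le\; \frac{e^{\max\{\sqrt{a},\sqrt{b}\}}}{2\min\{\sqrt{a},\sqrt{b}\}}\,|a-b|,
\]
which is the claimed inequality (the absolute values inside $\sqrt{|a|},\sqrt{|b|}$ in the statement are harmless since $a,b>0$).

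There is essentially no hard step here; the only things to be careful about are excluding the degenerate case $a=b$ before invoking the mean value theorem and making sure the intermediate point $\xi$ is pinned between $\min\{a,b\}$ and $\max\{a,b\}$ so that both the numerator and denominator estimates go in the right direction. This lemma (and its scaled corollary used in the uniqueness proof) is then applied with $a,b$ taken to be the two values of $I_1(s)$, whose strictly positive lower bound \eqref{eq:phi-lower} and upper bound \eqref{eq:phi-upper} supply exactly the $\min$ and $\max$ quantities needed to control $\Delta$.
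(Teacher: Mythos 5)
Your proof is correct and takes essentially the same route as the paper: a mean value theorem estimate in which the numerator is controlled by $e^{\max\{\sqrt{a},\sqrt{b}\}}$ and the denominator by $2\min\{\sqrt{a},\sqrt{b}\}$. The only (harmless) difference is that you apply the MVT once to the composite $x\mapsto e^{\sqrt{x}}$, whereas the paper applies it to $e^{x}$ between $\sqrt{a}$ and $\sqrt{b}$ and then uses the identity $\sqrt{a}-\sqrt{b}=(a-b)/(\sqrt{a}+\sqrt{b})$; both give exactly the stated bound.
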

 \begin{proof}
 	By the mean value theorem, the exists $c \in (x, y)$ (assuming $x >y$ without loss of generality) such that
$
e^{c} = \frac{e^{x}-e^y}{x-y}
$
and this implies
$
|e^{\sqrt{a}} - e^{\sqrt{b}}|\leq e^{\max\{|\sqrt{a}|, |\sqrt{b}|\}}|\sqrt{a}-\sqrt{b}|.
$
We note that 
$
\sqrt{a} -\sqrt{b} = \frac{1}{\sqrt{a} +\sqrt{b}} (a-b),
$
and hence 
$
|\sqrt{a} -\sqrt{b}| = \frac{1}{\sqrt{a} +\sqrt{b}} |a-b| \leq \frac{1}{2 M_0} |a-b|
$
where $M_0 : =\min\{\sqrt{a}, \sqrt{b}\}>0 $. 
  Combining the observations above leads to the desired result.
 \end{proof}
 
 An immediate consequence of Lemma \ref{lem:sqrt-bound} is the following.
\begin{corollary}\label{cor:sqrt-bound-scale} For $a>0$, $b>0$ and $c>0$, the following inequality holds
  	\[
 |e^{c\sqrt{a}} -e^{c\sqrt{b}}| \leq \frac{e^{{\max\{c\sqrt{|a|}, c\sqrt{|b|}\}}}}{2\min\{c\sqrt{a}, c\sqrt{b}\}} \cdot c^2|a-b|.
 \]
\end{corollary}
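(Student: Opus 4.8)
The plan is to obtain Corollary~\ref{cor:sqrt-bound-scale} as a direct corollary of Lemma~\ref{lem:sqrt-bound} via a change of variables, exploiting that the scaling constant $c$ is positive. The key observation is that $c\sqrt{a}=\sqrt{c^{2}a}$ and $c\sqrt{b}=\sqrt{c^{2}b}$ whenever $c>0$, so the left-hand side to be estimated is exactly $|e^{\sqrt{c^{2}a}}-e^{\sqrt{c^{2}b}}|$.

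First, I would invoke Lemma~\ref{lem:sqrt-bound} with its two arguments taken to be $c^{2}a$ and $c^{2}b$, which is admissible because $a,b,c>0$ implies $c^{2}a>0$ and $c^{2}b>0$. This yields
\[
|e^{\sqrt{c^{2}a}}-e^{\sqrt{c^{2}b}}|\le\frac{e^{\max\{\sqrt{c^{2}a},\sqrt{c^{2}b}\}}}{2\min\{\sqrt{c^{2}a},\sqrt{c^{2}b}\}}\,|c^{2}a-c^{2}b|.
\]
Second, I would rewrite the right-hand side: since $c>0$, multiplication by $c$ pulls out of the square roots and commutes with $\max$ and $\min$, giving $\sqrt{c^{2}a}=c\sqrt{a}$, $\max\{\sqrt{c^{2}a},\sqrt{c^{2}b}\}=\max\{c\sqrt{|a|},c\sqrt{|b|}\}$ (the absolute values being immaterial as $a,b>0$), $\min\{\sqrt{c^{2}a},\sqrt{c^{2}b}\}=\min\{c\sqrt{a},c\sqrt{b}\}$, and $|c^{2}a-c^{2}b|=c^{2}|a-b|$. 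Substituting these identities turns the bound into precisely the claimed inequality, completing the argument.

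There is essentially no obstacle here — the proof is a one-line substitution — and the only point requiring a moment's care is the sign bookkeeping when factoring the positive constant $c$ through the roots, the $\max$, the $\min$, and the difference $|a-b|$; all of this is immediate from $c>0$. As an alternative, a self-contained proof mirrors that of Lemma~\ref{lem:sqrt-bound}: the mean value theorem applied to $x\mapsto e^{x}$ gives $|e^{c\sqrt{a}}-e^{c\sqrt{b}}|\le e^{\max\{c\sqrt{a},c\sqrt{b}\}}|c\sqrt{a}-c\sqrt{b}|$, and then $|c\sqrt{a}-c\sqrt{b}|=c|a-b|/(\sqrt{a}+\sqrt{b})\le c^{2}|a-b|/(2\min\{c\sqrt{a},c\sqrt{b}\})$, so combining the two estimates produces the conclusion.
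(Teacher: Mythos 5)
Your substitution $a\mapsto c^{2}a$, $b\mapsto c^{2}b$ in Lemma~\ref{lem:sqrt-bound} is exactly how the paper treats this statement (it is presented there as an immediate consequence of the lemma), and your bookkeeping with $c>0$ is correct. The self-contained mean-value-theorem variant you sketch is also fine, but no further comment is needed.
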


\begin{lemma}[Equicontinuity]\label{lem:equicontinuouity}
	Let $\mathcal{T}$ be given by \eqref{eq:Tao-Mapping} and $K  := \{\Pi \in C([0, T], \mathbb{R}): \| \Pi\|_{\infty} \leq M\}$ for some $M>0$. Then the set $\{\mathcal{T}(\Pi): \Pi \in K\}$ is equicontinuous. 
\end{lemma}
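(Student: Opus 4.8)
The plan is to pass to the integral representation of $\mathcal{T}$ and reduce equicontinuity to a single uniform bound on the integrand. Writing $\mathcal{T}(\Pi)(t) = \int_t^T g_\Pi(s)\,ds$, where $g_\Pi(s)$ denotes the integrand appearing in \eqref{eq:Tao-Mapping}, it suffices to produce a constant $C$, \emph{independent of $\Pi \in K$}, with $|g_\Pi(s)| \le C$ for every $s \in [0,T]$. Granting this, for any $t_1, t_2 \in [0,T]$ and any $\Pi \in K$,
\[
|\mathcal{T}(\Pi)(t_1) - \mathcal{T}(\Pi)(t_2)| = \left| \int_{t_1}^{t_2} g_\Pi(s)\,ds \right| \le C\,|t_1 - t_2|,
\]
so the family $\{\mathcal{T}(\Pi) : \Pi \in K\}$ is uniformly Lipschitz with constant $C$, and taking $\delta = \varepsilon/C$ yields equicontinuity at once.

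To construct $C$ I would bound $g_\Pi$ term by term, reusing the estimates from the uniform-boundedness step of the Existence Proposition. The polynomial part satisfies $|2a\Pi_s - \frac{b^2}{r}\Pi_s^2| \le 2|a|M + \frac{b^2}{r}M^2$ for all $\Pi \in K$. For the exponential part, $\bar{x}(s) = e^{as}\mu_0 \le \mu^* := \max\{\mu_0, e^{aT}\mu_0\}$, and the argument of the square root, namely $\phi(s,0)V_0 + \sigma^2 \int_0^s \phi(s,\tau)\,d\tau$ with $\phi(s,\tau) = \exp\left(\int_\tau^s 2(a - \frac{b^2}{r}\Pi_u)\,du\right)$, is nonnegative and bounded above by $(V_0 + \sigma^2 T)\exp\left(2T(|a| + \frac{b^2}{r}M)\right)$ using only $\|\Pi\|_\infty \le M$. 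Hence $\Phi^{-1}_{\alpha}(\cdot)^{1/2} \le |\Phi^{-1}_{\alpha}|(V_0 + \sigma^2 T)^{1/2}\exp\left(T(|a| + \frac{b^2}{r}M)\right)$, so the exponential term of $g_\Pi$ is at most $q\exp\left[\mu^* + |\Phi^{-1}_{\alpha}|(V_0 + \sigma^2 T)^{1/2}\exp\left(T(|a| + \frac{b^2}{r}M)\right)\right]$. Collecting the pieces gives the admissible choice
\[
C := 2|a|M + \frac{b^2}{r}M^2 + q + q\exp\left[\mu^* + |\Phi^{-1}_{\alpha}|(V_0 + \sigma^2 T)^{1/2}\exp\left(T\left(|a| + \frac{b^2}{r}M\right)\right)\right],
\]
which is exactly the bracketed quantity on the left-hand side of \eqref{eq:T-M relation} divided by $T$, consistent with the bound $\|\mathcal{T}(\Pi)\|_\infty \le CT$ obtained earlier.

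This argument has no genuine obstacle: the only point requiring care is that each quantity dominating $g_\Pi$ be independent of the particular $\Pi \in K$, which holds because all $\Pi$-dependence has been absorbed into the crude estimate $\|\Pi\|_\infty \le M$ and the trivial inequality $a - \frac{b^2}{r}\Pi_u \le |a| + \frac{b^2}{r}M$ used inside the exponentials. In effect this lemma is a byproduct of the estimates already carried out in the Existence Proposition, now phrased as a bound on the integrand rather than on the integral.
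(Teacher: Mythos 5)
Your proof is correct and follows essentially the same route as the paper: bound the integrand of $\mathcal{T}(\Pi)$ uniformly over $\Pi \in K$ (arriving at the same constant $C$, including the $\mu^*$ and $|\Phi^{-1}_{\alpha}|(V_0+\sigma^2 T)^{1/2}e^{T(|a|+\frac{b^2}{r}M)}$ terms), conclude a uniform Lipschitz bound $C|t_1-t_2|$, and take $\delta = \varepsilon/C$. No substantive differences from the paper's argument.
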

\begin{proof}
To show the equicontinuity \cite[Def. 7.22]{rudin1976principles} of $\{\mathcal{T}(\Pi): \Pi \in K\}$ with $\mathcal{T}$ given by \eqref{eq:Tao-Mapping},
 we need to prove that for every \(\epsilon > 0\), there exists a \(\delta > 0\) such that for any \(t_1, t_2 \in [0, T]\) with \(|t_1 - t_2| < \delta\), the following inequality holds 
$
|\mathcal{T}(\Pi)(t_1) - \mathcal{T}(\Pi)(t_2)| < \epsilon$  for all $\Pi  \in K.
$
Consider the difference:
\[
\begin{aligned}
|\mathcal{T}(\Pi)(t_1) - \mathcal{T}(\Pi)(t_2)| &= \left| \int_{t_1}^{t_2} {I}(s) \, ds \right| \leq  \int_{t_1}^{t_2} \left|{I}(s)  \right|\, ds
\end{aligned}
\]
where \({I}(s)\) is the integrand for \(\mathcal{T}(\Pi)(t)\) given by
\[
\begin{aligned}
	&I(s)  = 2a \Pi_s - \frac{b^2}{r} \Pi_s^2 + q +  q\exp\left[\bar{x}(s)+\Phi^{-1}_{\alpha}  F ^{\frac12} \right].
\end{aligned}
\]
where 
$
F := \exp\left(\int_0^s 2\left(a-\frac{b^2}{r} \Pi_u\right)du\right)  V_0 + \sigma^2 \int_0^s \exp \left(\int_\tau^s 2\left(a - \frac{b^2}{r} \Pi_u \right) du \right) d\tau.
$
Since  \(\|\Pi\|_\infty \leq M\) for a given \(M > 0\), the terms inside the integrand $I(s)$ are uniformly bounded: first,
\[
|2a \Pi_s - \frac{b^2}{r} \Pi_s^2 + q | \leq 2|a|M +  \frac{b^2}{r}M^2 + q;
\]
second,  the exponential term inside $I(s)$ that involves \(\Pi\) is also uniformly bounded as follows:
\[
\begin{aligned}
q\exp\left[\Phi^{-1}_{\alpha}  F ^{\frac12} \right]	\leq q\exp\left[|\Phi^{-1}_{\alpha}| \cdot  (V_0 + \sigma^2 T )^{\frac12} e^{(T(|a|+ \frac{b^2}{r}M))} \right] 
\end{aligned}
\]
since
$
	|F| 
	\leq (V_0 + \sigma^2 T ) e^{(2T(|a|+ \frac{b^2}{r}M))}. 
$
Let the uniform bound be denoted by \[C := 2|a|M +  \frac{b^2}{r}M^2 + q +  q\exp\left[\mu^*+ |\Phi^{-1}_{\alpha}| \cdot  (V_0 + \sigma^2 T )^{\frac12} e^{(T(|a|+ \frac{b^2}{r}M))} \right]  > 0\] with $\mu^* := \max\{\mu_0, e^{aT}\mu_0\}$. Then we have
\[
\begin{aligned}
|\mathcal{T}(\Pi)(t_1) - \mathcal{T}(\Pi)(t_2)|  
\leq \int_{t_1}^{t_2} |I(s)| \, ds 
\leq \int_{t_1}^{t_2} C \, ds = C |t_2 - t_1|.
\end{aligned}
\]
Hence, if we choose \(\delta = \frac{\epsilon}{C}\), then 
$
|\mathcal{T}(\Pi)(t_1) - \mathcal{T}(\Pi)(t_2)| < \epsilon
$
for all \(\Pi \in K\) whenever \(|t_1 - t_2| < \delta\). That is, the set $\{\mathcal{T}(\Pi): \Pi \in K\}$ is equicontinuous. 
\end{proof}
\bibliographystyle{plain}
\bibliography{mybib}
\end{document}